\documentclass{amsart}
\usepackage{fullpage}

\usepackage{amssymb}

\usepackage{hyperref}
\usepackage{amsthm}
\usepackage{cleveref}

\theoremstyle{definition}
\newtheorem{theorem}{Theorem}[section]

\newtheorem{proposition}[theorem]{Proposition}
\newtheorem{corollary}[theorem]{Corollary}
\newtheorem{lemma}[theorem]{Lemma}
\newtheorem{definition}[theorem]{Definition}

\newtheorem{remark}[theorem]{Remark}

\numberwithin{equation}{section}

\usepackage{mathtools}

\newcommand{\Evol}{\textsf{Evol}} % Euclidean Volume
\newcommand{\nvol}{\textsf{nvol}} % Normalized Volume
\newcommand{\PS}{\operatorname{PS}} % Pitman Stanley Polytope
\newcommand{\PF}{\operatorname{PF}} % Parking Functions
\newcommand{\PC}{\operatorname{PC}} % Parking Completions
\usepackage{listings}
\usepackage{xcolor}
\definecolor{codegreen}{rgb}{0,0.6,0}
\definecolor{codegray}{rgb}{0.5,0.5,0.5}
\definecolor{codepurple}{rgb}{0.58,0,0.82}
\definecolor{backcolour}{rgb}{0.95,0.95,0.92}

\lstdefinestyle{mystyle}{
    backgroundcolor=\color{backcolour},   
    commentstyle=\color{codegreen},
    keywordstyle=\color{magenta},
    numberstyle=\tiny\color{codegray},
    stringstyle=\color{codepurple},
    basicstyle=\ttfamily\footnotesize,
    breakatwhitespace=false,         
    breaklines=true,                 
    captionpos=b,                    
    keepspaces=true,                 
    numbers=left,                    
    numbersep=5pt,                  
    showspaces=false,                
    showstringspaces=false,
    showtabs=false,                  
    tabsize=2
}

\begin{document}

% \title[short text for running head]{full title}
\title{Parking completions are $\mathbf{x}$-parking functions}

%    Only \author and \address are required; other information is
%    optional.  Remove any unused author tags.

%    author one information
% \author[short version for running head]{name for top of paper}
\author{Kimberly P. Hadaway}
\address{Iowa State University, Carver Hall, 411 Morrill Road, Ames, IA 50014}
\curraddr{}
\email{kph3@iastate.edu}
\thanks{}

% %    author two information
% \author{}
% \address{}
% \curraddr{}
% \email{}
% \thanks{}

%    \subjclass is required.
\subjclass[2020]{Primary 05 Combinatorics, 05A Enumerative combinatorics, 05A19 Combinatorial identities, bijective combinatorics}

\date{\today}

\dedicatory{}

%    Abstract is required.
\begin{abstract}
Parking functions correspond with preferences of $n$ cars which enter sequentially to park on a one-way street where (1) each car parks in the first available spot greater than or equal to its preference and (2) all cars successfully park. 
We generalize parking functions to parking completions: 
Here, we are given that some cars have already parked in a set of spots, which are indexed in a sequence~$\mathbf{t}$. 
We then consider a preference list $\mathbf{c}$, where length of $\mathbf{t}$ + length of $\mathbf{c}$ = $n$. 
If all cars can park, we say that $\mathbf{c}$ is a parking completion. 
Adeniran et al.\ (2020) state an open problem which proposes a connection between the number of parking completions to the volumes of Pitman-Stanley polytopes by explicit computation on small values of $n$. 
In this paper, we provide a solution to this open problem by proving a theorem which explains that the set of parking completions is the set of $\mathbf{x}$-parking functions.
\end{abstract}
\maketitle

%    Text of article.

% \section{Introduction}\label{sec:intro_short_paper}

\section{Introduction}

Our main result establishes that parking completions are $\mathbf{x}$-parking functions. 
To this end, we provide some context to these families of combinatorial objects and a connection to Pitman-Stanley polytopes.
In this section, we detail definitions and known results about parking functions, their generalizations, polytopes, and a specific family called Pitman-Stanley polytopes.

\subsection{Parking Functions}\label{sec:pf_background_short_paper}
The parking function problem was originally proposed by Konheim and Weiss \cite{bib:KW} in 1962. 
Since then, a less gendered version of the problem has been studied broadly.
We begin with defining (classical) parking functions.
Imagine a one-way one-lane dead-end street with $n$ parking spots numbered $1, 2, \hdots, n$ in increasing order and $n$ cars waiting to park.
Let $\mathbf{a} = (a_1, a_2, \hdots, a_n) \in [n]^{n}$ where $[n] \coloneqq \{1, 2, \hdots, n\}$; we call $\mathbf{a}$ a \textbf{preference list}.
For each $i\in [n]$, we interpret $a_i$ as the parking preference for car $c_i$.
The cars enter the street in order $c_1, c_2, \hdots, c_n$ and park according to this rule: 
When car $c_i$ enters the street, it approaches spot $a_i$.
If spot $a_i$ is open, car $c_i$ parks there.
If not, car $c_i$ proceeds down the street and takes the next available spot (if one exists).
If all of the cars can park with preferences as described in $\mathbf{a}$, then we call $\mathbf{a}$ a \textbf{parking function} (of length $n$).

Parking functions have been enumerated by N. Shales \cite{bib:recursionproof} using a recursive formula and by Henry O.\ Pollak \cite{bib:stanleydocpollakproof} using an explicit formula; both formulas have been further explained in expository work by Hadaway \cite{bib:GirlsAnglePart1,bib:GirlsAnglePart2}.
Moving forward, we write $\PF_n$ for the set of all parking functions of length $n$.
An equivalent definition of a parking function follows:
    Let $\mathbf{a}$ be a preference list.
    Let $\mathbf{a}^\uparrow$ denote the weakly increasing rearrangement of~$\mathbf{a}$. Then,
    \begin{align}
    \mbox{$\mathbf{a}$ is a parking function if and only if $\mathbf{a}_i^\uparrow \leq i$ for all $i\in[n]$.}\label{prop:pf_increasing_condition}
    \end{align}
This immediately implies 
the following: 
\begin{align}\label{cor:any_pf_permutation_works}
    \mbox{Every permutation of a parking function $\mathbf{a}$ is a parking function.}
\end{align}
Statements~\eqref{prop:pf_increasing_condition} and~\eqref{cor:any_pf_permutation_works} are well-referenced in the literature on parking functions.

We now discuss two main generalizations of parking functions: $\mathbf{x}$-parking functions and parking completions.
Generalizing Statement~\eqref{prop:pf_increasing_condition}, we change the upper bound in the inequality for each entry in our preference lists, and we get the following definition.

\begin{definition}\label{def:x_parking_function}
    Let $\mathbb{N}=\{1,2,3,\ldots\}$. For $\mathbf{x} \coloneqq (x_1, x_2, \hdots, x_n) \in \mathbb{N}^n$, an \textbf{$\mathbf{x}$-parking function} is a sequence $\mathbf{a} \in\mathbb{N}^n$ whose increasing rearrangement $\mathbf{a}^\uparrow$ satisfies $\mathbf{a}_i^\uparrow \leq x_1 + x_2 + \cdots + x_i$, for all $i\in[n]$.
\end{definition} 

To obtain parking completions from parking functions, we return to a parking scenario.
Imagine that $m$ of the $n$ spots in our original parking setup are taken before the remaining $n-m$ cars begin to park.
We index the taken spots in increasing order in a sequence $\mathbf{t}$, and we have the following definition adapted from Adeniran, Butler, Dorpalen-Barry, Harris, Hettle, Liang, Martin, and Nam \cite{bib:ParkingCompletions}.

\begin{definition}\label{def:parking_completion}
    Fix an increasing list $\mathbf{t} = (t_1, t_2, \hdots, t_m) \in [n]^m$.
    Let $\mathbf{c} = (c_1, c_2, \hdots, c_{n-m}) \in [n]^{n-m}$.
    We let $\mathbf{t}|\mathbf{c} := (t_1, t_2, \hdots, t_m, c_1, c_2, \hdots, c_{n-m})$, and we call this the \textbf{concatenation of $\mathbf{t}$ and $\mathbf{c}$}.
    If $\mathbf{t}|\mathbf{c}$ is a parking function, we say that $\mathbf{c}$ is a \textbf{parking completion for a sequence $\mathbf{t}$}.
    % If $(t_1, t_2, \hdots, t_m, c_1, c_2, \hdots, c_{n-m})$ is a parking function, we say that $\mathbf{c}$ is a \textbf{parking completion for a sequence $\mathbf{t}$}.
    % We then consider a preference list $c$, where $\operatorname{len}(t)+\operatorname{len}(c)=n$. 
    % If all cars can park, we say that $c$ is a parking completion. 
    % \kph{add some exposition here about what a parking completion is and notation that you use for it, specifically, what are $\mathbf{t}$, $\mathbf{c}$, and what does it mean for a vector to be a parking completion.}
\end{definition}

We remark that Adeniran et al.\ \cite{bib:ParkingCompletions} mention in their definition of parking completions that one can equivalently require $c_i \leq u_i$ where $\mathbf{u} = (u_1, u_2, \hdots, u_{n-m})$ denotes the unoccupied spots on the street. 
In \Cref{sec:main_problem_main_result_short_paper}, we include a proof of this equivalence which is formally stated in Lemma~\ref{lem:pc_increasing_condition}. 
This equivalence readily implies that parking completions are permutation invariant which is proven independently in Lemma~\ref{lem:any_pc_permutation_works}.

% \begin{corollary}
%     Every permutation of a parking completion is a parking completion.
% \end{corollary}

\subsection{Polytopes}\label{sec:polytope_background_short_paper}
We now introduce background for Pitman-Stanley polytopes.
Recall for $n \in \mathbb{Z}_{>0}$, a \textbf{polytope} $P \subseteq \mathbb{R}^n$ is the convex hull of finitely many vertices $v_i \in \mathbb{R}^n$ or, equivalently, the intersection of finitely many hyperplanes in $\mathbb{R}^n$. 
In this context, given a polytope $P$, the \textbf{(Euclidean) volume of $P$}, denoted $\Evol$, is its volume computed in the traditional sense.
Moreover, the \textbf{volume of $P$}, denoted $\nvol$, is the normalized volume of $P$ which is computed as $\operatorname{vol}(P) = \operatorname{dim(P)} \cdot ~ \Evol(P)$. 

To define the polytopes called \textit{Pitman-Stanley polytopes}, we recall that 
given 
     $\mathbf{x} = (x_1, x_2, \hdots, x_n)$ and $\mathbf{y} = (y_1, y_2, \hdots, y_n)$ in $\mathbb{R}_{\geq 0}^n$, 
    we say \textbf{$\mathbf{x}$ is less than $\mathbf{y}$ in dominance order} and write $\mathbf{x} \preceq \mathbf{y}$ if 
    \begin{align}
        \mbox{$x_1 + x_2 + \cdots + x_i \leq y_1 + y_2 + \cdots + y_i$ for all $i\in[n]$.}
    \end{align}
We can now define Pitman-Stanley polytopes.
\begin{definition}
    Given a list $\mathbf{a} = (a_1, a_2, \hdots, a_n) \in \mathbb{Z}_{\geq 0}^n$, the \textbf{Pitman-Stanley polytope} (with respect to~$\mathbf{a}$) is 
    \[
    \PS_n(\mathbf{a}) \coloneqq \{ \mathbf{x} = (x_1, x_2, \hdots, x_n) \in \mathbb{R}_{\geq 0}^n \; | \; (x_1, x_2, \hdots, x_n) \preceq (a_1, a_2, \hdots, a_n)\}.
    \]
\end{definition}

It is important to note that the Pitman-Stanley polytope is defined in reference to a fixed list.
That is, if $\mathbf{a}$ and $\mathbf{a'}$ are distinct lists in $\mathbb{Z}_{\geq 0}^n$, then $\PS_n(\mathbf{a}) \neq \PS_n(\mathbf{a'})$ since vectors in the respective sets satisfy different sets of inequalities.

Stanley and Pitman\footnote{The paper \cite{bib:PitmanStanley} lists the authors as ``Stanley and Pitman'' which is unusual for the mathematical sciences as that is not the standard alphabetical order.}
\cite{bib:PitmanStanley} give an explicit formula to enumerate parking functions using volumes of $n$-dimensional polytopes.
To state this result, we recall the following definitions and notation used in \cite[p.\ 1-2]{bib:PitmanStanley}:
    An \textbf{$n$-dimensional polytope} is
    % \[
    % \Pi_n(\mathbf{x}) \coloneqq \{ \mathbf{y} \in \mathbb{R}^n : y_i \geq 0 \mbox{ and } y_1 + y_2 + \cdots + y_i \leq x_1 + x_2 + \cdots + x_i \mbox{ for all $i \in [n]$} \}
    % \]
    \[
    \Pi_n(\mathbf{x}) \coloneqq \{ \mathbf{y} \in \mathbb{R}^n : y_i \geq 0 \mbox{ and } \mathbf{y} \preceq \mathbf{x} \}
    \]
    for arbitrary $\mathbf{x} \coloneqq (x_1, x_2, \hdots, x_n)$ with $x_i > 0$ for all $i \in [n]$.
    The $n$-dimensional volume of this polytope
    \[
    V_n(\mathbf{x}) \coloneqq \operatorname{Vol}(\Pi_n(\mathbf{x}))
    \]
    is a homogeneous polynomial of degree $n$ in the variables $x_1, x_2, \hdots, x_n$ which we call the \textbf{volume polynomial}.
Notice that $V_n(\mathbf{x}) = \Evol(\PS_n(\mathbf{x}))$.
% , and we interchange notation,
% using $V_n(\mathbf{x})$ to reference content from \cite{bib:PitmanStanley} and $\Evol(\PS_n(\mathbf{x}))$ to discuss new results in this work.
% \kph{In this definition, are we assuming x dominates y? yes }
The volume polynomial is interpreted as an expression that gives the polynomial of the polytope in question by substituting numbers for the $n$ variables in the polynomial.
% For example, if $\mathbf{x} = (x_1)$, then $V_n(\mathbf{x}) = x_1$.
% We can also write 
% \[V_2(\mathbf{x}) =  x_1x_2 + \textstyle\frac{1}{2}x_1^2\]
% and
% \[V_3(\mathbf{x}) = x_1x_2x_3 + \textstyle\frac{1}{2}x_1^2x_2 + \textstyle\frac{1}{2}x_1x_2^2 + \textstyle\frac{1}{2}x_1^2x_3 + \textstyle\frac{1}{6}x_1^3.\]
% \kph{Find source for these polynomials and how to generalize nicely.}
% To generalize this formula, we reference the \cite{bib:PitmanStanley} where the authors prove the following.
Stanley and Pitman \cite{bib:PitmanStanley} explicitly describe this formula which we print in \Cref{thm:volume_formula} for ease of reference.

\begin{theorem}\label{thm:volume_formula} \cite[Theorem 1]{bib:PitmanStanley}
    For each $n = 1, 2, \hdots$,
    \begin{equation}\label{eq:poly_vol_fmla}
    V_n(\mathbf{x}) = \sum_{\mathbf{k} \in K_n} \prod_{i=1}^n \frac{x_i^{k_i}}{k_i!} = \frac{1}{n!} \sum_{\mathbf{k} \in K_n} {\binom{n}{k_1, k_2, \hdots, k_n}} x_1^{k_1}x_2^{k_2} \cdots x_n^{k_n}
    \end{equation}
    where 
    \begin{equation}\label{def:the_set_K_n}
        K_n \coloneqq \left\{ \mathbf{k} \in \mathbb{N}^n : \sum_{i=1}^j k_i \geq j \text{ for all } 1 \leq j \leq n-1 \text{ and } \sum_{i=1}^n k_i = n \right\}
    \end{equation}
    with $\mathbb{N} = \{0, 1, 2, \hdots\}$.
\end{theorem}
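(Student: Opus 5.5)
We prove \Cref{thm:volume_formula} by induction on $n$, peeling off the last coordinate, and then check that the iterated integral matches the sum over $K_n$. Write $X_i = x_1+\cdots+x_i$ and $Y_i = y_1 + \cdots + y_i$. The polytope $\Pi_n(\mathbf{x})$ is cut out by $y_i\ge 0$ and $Y_i\le X_i$. Because $Y_n \le X_n$ is the only constraint involving $y_n$, Fubini gives
\[
V_n(\mathbf{x}) = \int_{\Pi_{n-1}(x_1,\dots,x_{n-1})} \bigl(X_n - Y_{n-1}\bigr)\, dy_1\cdots dy_{n-1}.
\]
This integrand is not a constant, so induction on the plain volume alone is not enough.

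The cleaner route is to strengthen the statement and induct on the strengthened version. Define $F_j(\mathbf{x}; s)$ as the volume of the set of $(y_1,\dots,y_j)\ge 0$ with $Y_i \le X_i$ for $i<j$ and $Y_j \le X_j + s$, where $s \geq 0$ is slack. The claim is
\[
F_j(\mathbf{x};s) = \sum \prod_{i=1}^{j} \frac{x_i^{k_i}}{k_i!}\cdot\frac{s^{k_{j+1}}}{k_{j+1}!},
\]
where the sum runs over $(k_1,\dots,k_{j+1})\in K_{j+1}$. In other words, $s$ plays the role of an extra variable $x_{j+1}$ that is forced to take the last slot. Setting $s=0$ recovers \eqref{eq:poly_vol_fmla}, since terms with $k_{j+1}>0$ vanish and the remaining index vectors are exactly $K_j$.

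For the induction step, integrate out $y_j$, which ranges over $[0,\, X_j + s - Y_{j-1}]$. The remaining region in $(y_1,\dots,y_{j-1})$ is the $F_{j-1}$ region with slack $s' = x_j + s - y_{j-1}$ or similar. To avoid that dependence, the better choice is to differentiate instead of integrate. Changing $x_j$ or $s$ only moves the last facet, so
\[
\partial_s F_j(\mathbf{x};s) = \text{the $(j-1)$-dimensional volume of the slice } \{Y_j = X_j+s\}.
\]
Projecting this slice to $(y_1,\dots,y_{j-1})$ gives the region $Y_i\le X_i$ for $i\le j-1$, together with $Y_{j-1} \le X_j+s = X_{j-1} + (x_j+s)$. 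That is, the slice is $F_{j-1}(\mathbf{x}; x_j+s)$. So
\[
\partial_s F_j(\mathbf{x};s) = F_{j-1}(x_1,\dots,x_{j-1};\, x_j+s), \qquad F_j(\mathbf{x};\,-x_j)=F_{j-1}(\mathbf{x};0)\ \text{(degenerate boundary case)},
\]
with the base case $F_0(s) = 1$. More simply, take $F_j$ at $s=-X_j$ to be $0$ and extend polynomially.

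The remaining step, which I expect to be the main obstacle, is verifying that the generating sum over $K_{j+1}$ satisfies the same differential recursion and the same initial condition. Differentiating $s^{k_{j+1}}/k_{j+1}!$ lowers $k_{j+1}$ by one. Expanding $(x_j+s)^{m}/m!$ by the binomial theorem splits the last exponent $m$ of a $K_j$ vector into $k_j + k_{j+1}'$. The job is to show that the map from $K_{j+1}$ vectors with $k_{j+1}\ge1$ (after decrementing) to pairs (a $K_j$ vector, a split of its last entry) is a bijection. The prefix conditions $\sum_{i\le l} k_i \ge l$ must match exactly: the condition at $l=j$ becomes $k_1+\cdots+k_j \ge j$, while the total drops to $j$ after the decrement. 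Checking this compatibility, and handling the constant term that fixes the integration constant, is where the care goes. An alternative I would keep in reserve is Stanley and Pitman's own argument: decompose $\Pi_n(\mathbf{x})$ into products of simplices indexed by $K_n$, or equivalently count lattice points in the dilation $t\,\Pi_n(\mathbf{x})$ via $\mathbf{x}$-parking functions and take leading coefficients. That alternative ties in directly with \Cref{def:x_parking_function}.
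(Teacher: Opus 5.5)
The paper gives no proof of this theorem; it quotes it from Stanley and Pitman. Your argument therefore has to stand on its own, and as written it has a genuine gap: the strengthened induction hypothesis is false. $F_j(\mathbf{x};s)$ is the volume of a $j$-dimensional region, so it is homogeneous of degree $j$ in $(x_1,\dots,x_j,s)$. In fact $F_j(\mathbf{x};s)=V_j(x_1,\dots,x_{j-1},x_j+s)$. But every $\mathbf{k}\in K_{j+1}$ has $k_1+\cdots+k_{j+1}=j+1$, so your sum has degree $j+1$; it is $V_{j+1}(x_1,\dots,x_j,s)$, not $F_j$. Already for $j=1$ your formula gives $x_1^2/2+x_1s$ instead of $F_1=x_1+s$. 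For the same reason, setting $s=0$ does not produce $K_j$: the surviving vectors $(k_1,\dots,k_j,0)$ sum to $j+1$.

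The differentiation step is also wrong. The slice $\{Y_j=X_j+s\}$ still satisfies $Y_{j-1}\le X_{j-1}$, which is one of the constraints $i<j$. So the extra condition $Y_{j-1}\le X_j+s$ is redundant, and the slice projects onto $\Pi_{j-1}(x_1,\dots,x_{j-1})$. Hence $\partial_sF_j=V_{j-1}(x_1,\dots,x_{j-1})$, independent of $s$, and not $F_{j-1}(\mathbf{x};x_j+s)$, which silently drops $Y_{j-1}\le X_{j-1}$.

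Correctly stated, your recursion only says that $V_j$ is affine in $x_j$ with slope $V_{j-1}$. The constant term $V_j(x_1,\dots,x_{j-1},0)$, which carries all the content, is never determined. Both proposed initial conditions also fail:
\begin{enumerate}
\item $F_j(\mathbf{x};-x_j)=V_j(x_1,\dots,x_{j-1},0)$ has degree $j$, so it cannot equal $F_{j-1}(\mathbf{x};0)$.
\item The polynomial $F_2=x_1^2/2+x_1(x_2+s)$ equals $-x_1^2/2\neq0$ at $s=-X_2$, so it does not vanish there.
\end{enumerate}
Consequently the matching you flagged as the main obstacle cannot succeed. Since $k_{j+1}\le 1$ for every $\mathbf{k}\in K_{j+1}$, $\partial_s$ of your sum is independent of $s$, while the right-hand side of your recursion is not. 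For $j=1$: $\partial_s(x_1^2/2+x_1s)=x_1$, whereas your own formula gives $F_0(\mathbf{x};x_1+s)=x_1+s$. The reserve plan (dissection into products of simplices, or lattice-point counting) is only named, so it does not fill the gap.

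An induction that does close comes from integrating out $y_1$ instead of $y_n$. Since $y_1$ appears in every constraint, fixing $y_1\in[0,x_1]$ leaves exactly $\Pi_{n-1}(x_1-y_1+x_2,x_3,\dots,x_n)$. Substituting $u=x_1-y_1$ gives
\[
V_n(\mathbf{x})=\int_0^{x_1}V_{n-1}(u+x_2,x_3,\dots,x_n)\,du .
\]
Then proceed as follows.
\begin{enumerate}
\item Insert the formula for $V_{n-1}$ and expand $\frac{(u+x_2)^{k'_1}}{k'_1!}=\sum_{a=0}^{k'_1}\frac{u^a}{a!}\frac{x_2^{k'_1-a}}{(k'_1-a)!}$.
\item Integrate $u^a/a!$ from $0$ to $x_1$ to get $x_1^{a+1}/(a+1)!$.
\item Use the map $(\mathbf{k}',a)\mapsto(a+1,\,k'_1-a,\,k'_2,\dots,k'_{n-1})$. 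It is a bijection from $\{(\mathbf{k}',a):\mathbf{k}'\in K_{n-1},\ 0\le a\le k'_1\}$ onto $K_n$. The reason is that for $j\ge 2$ the $j$-th prefix sum of the image is $1$ plus the $(j-1)$-st prefix sum of $\mathbf{k}'$, and $k_1\ge 1$ is exactly the $j=1$ condition.
\end{enumerate}
With base case $V_1=x_1$ this proves the first equality. The second equality is just $\frac{1}{n!}\binom{n}{k_1,\dots,k_n}=\prod_i\frac{1}{k_i!}$.
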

We call elements of $K_n$ ``balanced sequences of length $n$.''
In \Cref{sec:appendix_code_for_K_n_short_paper}, we share code used to find and enumerate balanced sequences of length $n$.
This code supports the claim in \cite{bib:PitmanStanley} that $|K_n|$ is Catalan.

\begin{theorem}\label{thm:ps_vol+prkgfns}[\cite{bib:PitmanStanley}, Theorem 11]
    With $P_n(\mathbf{x})$ and $V_n(\mathbf{x})$ as above, we have
    \begin{equation}\label{eq:PS_pf+vol_xpkgfn}
    P_n(\mathbf{x}) 
    =
    \sum_{(a_1, a_2, \hdots, a_n) \in PF_n} x_{a_1}x_{a_2}\cdots x_{a_n} 
    \end{equation}
    and
    \begin{equation}\label{eq:PS_pf+vol_polytope}
    P_n(\mathbf{x}) 
    = 
    n! \cdot V_n(\mathbf{x}).
    \end{equation}
\end{theorem}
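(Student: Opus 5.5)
The first identity \eqref{eq:PS_pf+vol_xpkgfn} is the defining expression for the generating polynomial $P_n(\mathbf{x})$ of parking functions, so the real content is \eqref{eq:PS_pf+vol_polytope}. The plan is to sort parking functions by their \emph{content} and then match the result term by term with \Cref{thm:volume_formula}.

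For $\mathbf{a}\in[n]^n$, let $k_i$ be the number of indices $j$ with $a_j=i$, and write $\mathbf{k}(\mathbf{a})=(k_1,\dots,k_n)$. The monomial $x_{a_1}x_{a_2}\cdots x_{a_n}$ depends only on the content: it equals $x_1^{k_1}\cdots x_n^{k_n}$. Hence
\[
P_n(\mathbf{x})=\sum_{\mathbf{k}} N(\mathbf{k})\, x_1^{k_1}\cdots x_n^{k_n},
\]
where $N(\mathbf{k})$ is the number of parking functions with content $\mathbf{k}$.

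The key step, which I expect to be the main obstacle, is to show that the contents of parking functions are exactly the balanced sequences, $\{\mathbf{k}(\mathbf{a}) : \mathbf{a}\in\PF_n\}=K_n$. I would derive this from Statement~\eqref{prop:pf_increasing_condition} through the following equivalence:
\[
\mathbf{a}^\uparrow_j\le j \text{ for all } j \iff k_1+\cdots+k_j\ge j \text{ for all } j.
\]
For the forward direction, if $\mathbf{a}^\uparrow_j\le j$, then the $j$ smallest entries are all at most $j$, so $k_1+\cdots+k_j\ge j$. For the reverse direction, if $k_1+\cdots+k_j\ge j$, then at least $j$ entries are at most $j$, and therefore $\mathbf{a}^\uparrow_j\le j$. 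The condition at $j=n$ is automatic, since $\sum_i k_i=n$. Conversely, every $\mathbf{k}\in K_n$ is the content of some $\mathbf{a}\in[n]^n$: take the weakly increasing word with $k_i$ copies of $i$. Two details need checking here. First, the requirement $\mathbf{a}\in[n]^n$ matches the index range $1\le i\le n$ in $K_n$. Second, $K_n$ uses $\mathbb{N}=\{0,1,2,\dots\}$, so zero multiplicities are allowed.

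Next I would count $N(\mathbf{k})$ for $\mathbf{k}\in K_n$. By Statement~\eqref{cor:any_pf_permutation_works}, being a parking function depends only on the multiset of entries, that is, on the content. So every word with content $\mathbf{k}$ is a parking function, and the number of such words is the multinomial coefficient $\binom{n}{k_1,\dots,k_n}$. For $\mathbf{k}\notin K_n$ we have $N(\mathbf{k})=0$. This gives
\[
P_n(\mathbf{x})=\sum_{\mathbf{k}\in K_n}\binom{n}{k_1,k_2,\dots,k_n}x_1^{k_1}\cdots x_n^{k_n}.
\]

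Finally, \Cref{thm:volume_formula} says that the right-hand side of this last display equals $n!\cdot V_n(\mathbf{x})$, which proves \eqref{eq:PS_pf+vol_polytope}.
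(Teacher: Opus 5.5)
Your content argument is correct as far as it goes. The equivalence between $\mathbf{a}^\uparrow_j\le j$ for all $j$ and $k_1+\cdots+k_j\ge j$ for all $j$ is proved cleanly, the multinomial count is right, and matching with \Cref{thm:volume_formula} gives $\sum_{\mathbf{a}\in\PF_n}x_{a_1}\cdots x_{a_n}=n!\,V_n(\mathbf{x})$. (The paper gives no proof of its own here; it only cites Pitman and Stanley.)

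The gap is your first sentence. $P_n(\mathbf{x})$ is not defined by \eqref{eq:PS_pf+vol_xpkgfn}. The theorem lists that equation as a conclusion, which would be vacuous if it were the definition. The paper's own usage shows that $P_n(\mathbf{x})$ is the \emph{number of $\mathbf{x}$-parking functions} in the sense of Definition~\ref{def:x_parking_function}, for $\mathbf{x}$ with positive integer entries. \Cref{sec:main_problem_main_result_short_paper} says the cardinality of the set of $\mathbf{x}$-parking functions is given by \eqref{eq:PS_pf+vol_polytope}. The closing corollary then derives $|\PC_n(\mathbf{t})|=P_n(\mathbf{x})$ purely from the set equality $\PC_n(\mathbf{t})=\PF_{n-m}(\mathbf{x})$. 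So \eqref{eq:PS_pf+vol_xpkgfn} is exactly the statement linking $\mathbf{x}$-parking functions to classical ones, and your proposal never proves it. As written, nothing in your argument mentions $\mathbf{x}$-parking functions at all.

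The missing step is a block decomposition.
\begin{itemize}
\item Let $s_i=x_1+\cdots+x_i$ with $s_0=0$, and cut $\{1,\dots,s_n\}$ into the consecutive blocks $(s_{i-1},s_i]$, of sizes $x_i$.
\item For $b\le s_n$, let $\beta(b)$ be the index of its block, i.e.\ the least $i$ with $b\le s_i$.
\item The map $\beta$ is weakly increasing, so applying it entrywise commutes with sorting. Also $b\le s_j$ if and only if $\beta(b)\le j$, since the $s_i$ are nondecreasing.
\item Hence, for a sequence $\mathbf{b}$ with entries in $\{1,\dots,s_n\}$, we have $\mathbf{b}^\uparrow_j\le s_j$ for all $j$ if and only if $(\beta(b_1),\dots,\beta(b_n))\in\PF_n$.
\item Every $\mathbf{x}$-parking function has entries in $\{1,\dots,s_n\}$, because $\mathbf{b}^\uparrow_n\le s_n$, so no $\mathbf{x}$-parking function is missed.
\item The sequences $\mathbf{b}$ mapping to a given $\mathbf{a}\in\PF_n$ are obtained by choosing each $b_j$ freely in block $a_j$. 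So there are exactly $x_{a_1}x_{a_2}\cdots x_{a_n}$ of them.
\end{itemize}
Summing over $\mathbf{a}\in\PF_n$ gives \eqref{eq:PS_pf+vol_xpkgfn}. Only after this does your content computation yield \eqref{eq:PS_pf+vol_polytope} as a statement about counting $\mathbf{x}$-parking functions, which is the form the paper needs.
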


The paper is organized as follows.
In \Cref{sec:pf_background_short_paper}, we provided background information related to parking functions and their generalizations.
Parking functions have many applications, and in this paper, we care about their connection to polytope volumes.
Thus, in \Cref{sec:polytope_background_short_paper}, we provided background related to polytopes and, in particular, Pitman-Stanley polytopes.
In \Cref{sec:main_problem_main_result_short_paper}, we present an open problem which aims to connect parking completions and $\mathbf{x}$-parking functions.
Prior work by Adeniran et al.\ \cite{bib:ParkingCompletions} enumerates parking completions, and prior work by Pitman and Stanley \cite{bib:PitmanStanley} enumerates $\mathbf{x}$-parking functions with a distinct expression.
The paper by Adeniran et al.\ suggests that these formulas are related, and the authors were not able to explicitly describe such a relationship.
I present a solution to this problem.
In particular, the proof of \Cref{thm:MainEnumerativeResult} 
gives a bijection between these sets which establishes that these formulas are equivalent, and moreover, the bijection is the identity map.
We close with \Cref{sec:future_work_short_paper} detailing some problems extending this work with solutions to appear in forthcoming papers.
% Afterwards, \Cref{sec:appendix_code_for_K_n_short_paper}
% and
% \Cref{sec:appendix_code_check_all_xpf_short_paper} 
\Cref{sec:appendix_code}
shares SageMath code used
to
% In SageMath (code can be found in \Cref{sec:appendix_code_check_all_xpf_short_paper}), we 
compare the set of parking completions and $\mathbf{x}$-parking functions for all possible cases of $\mathbf{t}$ for $n \in [8]$. The data illustrates and motivated our main result.
% The code timed out for $n = 9$, yet the earlier results were encouraging, and a proof idea was formed.

\section{Main Problem and Main Result}\label{sec:main_problem_main_result_short_paper}

Adeniran et al.\ \cite{bib:ParkingCompletions} connected parking completions to the volumes of Pitman-Stanley polytopes by explicit computation on small values of $n$. 
They prove that the number of parking completions is given by
\begin{equation}\label{eqn:completions_count}
|\PC_n(\mathbf{t})| = \sum_{\mathbf{\ell} \in L_n(\mathbf{t})} \binom{n-m}{\mathbf{\ell}} \prod_{j=1}^{m+1} (\ell_j + 1)^{(\ell_j - 1)}
\end{equation}     
where $L_n (\mathbf{t})=\{\mathbf{\ell} = (\ell_1, \ell_2, \dots, \ell_{m+1} ) \in \mathbb{N}^{(m+1)}$ such that
$\ell_1 + \ell_2 + \dots + \ell_{j} \geq t_j - j$ for all $j \in [m]$ and $ \ell_1 + \ell_2 +  \dots + \ell_{m+1} = n-m$.
Moreover, they note that this agrees with the cardinality of the set of 
$\mathbf{x}$-parking functions which is given by \Cref{eq:PS_pf+vol_polytope}.
They then state that there is no obvious reason why these formulas agree for many $n$.
The authors remark ``there does not seem to be a simple translation between the two.''

We defined $\mathbf{x}$-parking functions earlier in Definition~\ref{def:x_parking_function}.
For ease of later computations, we now define $\mathbf{u}$-parking functions.

\begin{definition}\label{def:u_pkg_fn}
    For $\mathbf{u} \coloneqq (u_1, u_2, \hdots, u_n) \in \mathbb{N}^n$ with $u_1 \leq u_2 \leq \cdots \leq u_n$, define a \textbf{$\mathbf{u}$-parking function} to be a sequence $(a_1, a_2, \hdots, a_n)$ of positive integers whose nondecreasing rearrangement $(a_1^{\uparrow}, a_2^{\uparrow}, \cdots, a_n^{\uparrow})$ satisfies $1 \leq a_i^{\uparrow} \leq u_i$, for each $i\in[n]$.
    Moreover, we let $\PF_n(\mathbf{u})$ denote the set of $\mathbf{u}$-parking functions.
\end{definition} 

Notice that we can convert between these families of parking functions. 
That is, if $\mathbf{a}=(a_1,a_2,\ldots,a_n)$ is a $\mathbf{u}$-parking function for $\mathbf{u} = (u_1, u_2, \hdots, u_n)$, then $\mathbf{a}$ is an $\mathbf{x}$-parking function for $\mathbf{x} = (u_1, u_2-u_1, u_3-u_2, \hdots, u_{n} - u_{n-1})$.
Observe that for all $i \in [n]$, $x_1 + \cdots + x_i = u_i$, so $1 \leq a_i^{\uparrow} \leq u_i$ is equivalent to $1 \leq a_i^{\uparrow} \leq x_1 + \cdots + x_i$.

Additionally, we soon show that there is also a connection between $\mathbf{x}$-parking functions and parking completions. 
In particular, given a vector $\mathbf{x}$, we construct a sequence $\mathbf{t}$ which makes the corresponding sets of generalized parking functions agree. 
This leads to a combinatorial argument that the set of parking completions is the same as the set of $\mathbf{x}$-parking functions.
We formalize this result in \Cref{thm:MainEnumerativeResult}, and to that end, we assemble a series of results to simplify the argument.

Recall that every permutation of an $\mathbf{x}$-parking function is an $\mathbf{x}$-parking function since these are defined using a weakly increasing rearrangement. 
We now prove an analogous result for parking completions.

\begin{remark}
    The definition of parking completions by Adeniran et al.\ \cite{bib:ParkingCompletions} begins by adapting the parking scenario from the classical case to include some spots which are not available (referred to as taken spots), and the authors then describe a definition which concatenates the taken spots and the proposed parking completion. 
    % These notions are equivalent because 
    In this text, we move back and forth between these notions because they are equivalent: naming taken spots means that an initial queue of cars parks in these spots first, and we resolve that in the parking function setting by listing the taken spots at the start of our preference list in increasing order.
    
    Using the parking scenario is often helpful for determining whether something is or is not a parking completion.
    On the other hand, using the concatenation is helpful because it enables us to prove things about parking completions by shifting our perspective to parking functions, for which more results are known. % because the literature on parking functions is expansive.
    We implement this perspective shift to prove Lemma~\ref{lem:any_pc_permutation_works}.
    % In this text, we respect to the parking scenario; here we use the concatenation of t and c. These definitions are equivalent because blah \textcolor{red}{fix this remark}; we can just 
    % we shift to parking function world to get results that work
\end{remark}

\begin{lemma}\label{lem:any_pc_permutation_works}
    Every permutation of a parking completion is a parking completion.
\end{lemma}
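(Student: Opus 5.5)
We argue through the concatenation viewpoint from the preceding remark. Fix an increasing list $\mathbf{t} = (t_1, \hdots, t_m)$ and a parking completion $\mathbf{c} = (c_1, \hdots, c_{n-m})$ for $\mathbf{t}$. Let $\sigma$ be any permutation of $[n-m]$, and write $\mathbf{c}_\sigma = (c_{\sigma(1)}, \hdots, c_{\sigma(n-m)})$. We must show that $\mathbf{t}|\mathbf{c}_\sigma$ is a parking function. By Definition~\ref{def:parking_completion}, $\mathbf{t}|\mathbf{c}$ is a parking function of length $n$.

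The key observation is that $\mathbf{t}|\mathbf{c}_\sigma$ is itself a permutation of $\mathbf{t}|\mathbf{c}$. Concretely, it is obtained by applying the permutation $\tau$ of $[n]$ that fixes $1, \hdots, m$ and sends position $m+i$ to position $m+\sigma(i)$. Statement~\eqref{cor:any_pf_permutation_works} says every permutation of a parking function is a parking function, so $\mathbf{t}|\mathbf{c}_\sigma$ is a parking function. Hence $\mathbf{c}_\sigma$ is a parking completion for $\mathbf{t}$.

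If one prefers to avoid citing~\eqref{cor:any_pf_permutation_works}, the same conclusion follows directly from Statement~\eqref{prop:pf_increasing_condition}. The weakly increasing rearrangement of a sequence depends only on the multiset of its entries. Since $\mathbf{t}|\mathbf{c}$ and $\mathbf{t}|\mathbf{c}_\sigma$ have the same multiset of entries, they have the same increasing rearrangement, and the condition $\mathbf{a}^\uparrow_i \le i$ holds for one exactly when it holds for the other.

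No step here is a genuine obstacle. The only point needing care is that the permutation acts only on the $\mathbf{c}$ block, so $\mathbf{t}$ still occupies the first $m$ positions in increasing order. This is what keeps $\mathbf{t}|\mathbf{c}_\sigma$ a valid concatenation in the sense of Definition~\ref{def:parking_completion}, rather than some arbitrary rearrangement of $\mathbf{t}|\mathbf{c}$. We do not need to rederive the parking-scenario interpretation of the taken spots; the remark already justifies identifying it with the concatenation.
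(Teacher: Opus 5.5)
Your proposal is correct and matches the paper's proof: both note that $\mathbf{t}|\mathbf{c}'$ is a rearrangement of the parking function $\mathbf{t}|\mathbf{c}$ that only moves entries within the $\mathbf{c}$ block, so $\mathbf{t}$ stays in front, and then apply permutation invariance of parking functions, Statement~\eqref{cor:any_pf_permutation_works}. Your backup argument applies the increasing-rearrangement criterion~\eqref{prop:pf_increasing_condition} to the concatenation, which is simply the reason that statement holds. The paper's alternate proof instead uses the $\mathbf{u}$-characterization of Lemma~\ref{lem:pc_increasing_condition}.
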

\begin{proof}
Let $\mathbf{c} \in \PC_n(\mathbf{t})$.
This implies that all cars with preferences in $\mathbf{c}$ can park after spots in $\mathbf{t}$ are taken.
By \Cref{def:parking_completion}, the concatenation of $\mathbf{t}$ and $\mathbf{c}$, which is denoted $\mathbf{t}|\mathbf{c} \coloneqq (t_1, t_2, \hdots, t_m, c_1, c_2, \hdots, c_{n-m})$, is in the set $\PF_n$.

We invoke Statement~\ref{cor:any_pf_permutation_works} to find that any permutation of $\mathbf{t}|\mathbf{c}$ is a parking function.
Let $\mathbf{c}'$ denote any permutation of $\mathbf{c}$.
Since spots $t_1, t_2, \hdots, t_m$ are distinct, the first $m$ cars park in spots in $\mathbf{t}$.
Permuting entries of $\mathbf{c}$ only affects cars numbered $m+1, m+2, \hdots, n-m$ in the parking order.
Thus, we know $\mathbf{t}|\mathbf{c}' \in \PF_n$.
This implies that $\mathbf{c}' \in \PC_n(\mathbf{t})$ since all cars with preferences in $\mathbf{c}'$ can park after spots in $\mathbf{t}$ are taken.
\end{proof}

Recall that parking functions can be described using an inequality characterization rather than using the parking scenario.
In \cite[Definition~2.1]{bib:ParkingCompletions}, Adeniran et al.\ give an inequality characterization of parking completions which we restate below as a lemma for ease of reference.
%in \Cref{lem:pc_increasing_condition}.
% , and this proof is inspired by the proof of Statement~\ref{prop:pf_increasing_condition}.

\begin{lemma}\label{lem:pc_increasing_condition}
    Fix an increasing list $\mathbf{t} = (t_1, t_2, \ldots, t_m)\in [n]^m$. Let $\mathbf{c} = (c_1, c_2, \ldots, c_{n-m}) \in [n]^{n-m}$, and
    let $\mathbf{c}^\uparrow$ denote the weakly increasing rearrangement of $\mathbf{c}$.
    Then, $\mathbf{c}$ is a parking completion in $\PC_n(\mathbf{t})$ if and only if $c^\uparrow_i \leq u_i$ for all $i \in [n-m]$ where $\mathbf{u} = (u_1, u_2, \ldots, u_{n-m})$ is the increasing rearrangement of $[n] \setminus\{t_1,t_2,\ldots,t_m\}$.
    % $\operatorname{con}(\mathbf{t})$.
\end{lemma}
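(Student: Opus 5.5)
The plan is to reduce the claim to the parking function characterization in Statement~\eqref{prop:pf_increasing_condition}, applied to the concatenation $\mathbf{t}|\mathbf{c}$, and then translate the resulting inequalities into conditions on $\mathbf{c}^\uparrow$ alone. By \Cref{def:parking_completion}, $\mathbf{c}\in\PC_n(\mathbf{t})$ if and only if $\mathbf{t}|\mathbf{c}\in\PF_n$. By Statement~\eqref{prop:pf_increasing_condition}, this holds if and only if $(\mathbf{t}|\mathbf{c})^\uparrow_k\le k$ for all $k\in[n]$. An equivalent counting form will be more convenient: for every $s\in[n]$, the number of entries of $\mathbf{t}|\mathbf{c}$ that are $\ge s$ is at most $n-s+1$.

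The key step is to split this count as $T_s + C_s$. Here $T_s=\#\{j: t_j\ge s\}$ and $C_s=\#\{i: c_i\ge s\}$. Since the $t_j$ are distinct and $\mathbf{u}$ lists the complement of $\{t_j\}$ in $[n]$, we have
\[
(n-s+1)-T_s=\#\{i: u_i\ge s\}.
\]
So the parking function condition becomes: for all $s\in[n]$, $\#\{i: c_i\ge s\}\le \#\{i:u_i\ge s\}$. In other words, the tail counts of $\mathbf{c}$ are dominated by those of $\mathbf{u}$.

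It then remains to show that this tail-count domination is equivalent to $c^\uparrow_i\le u_i$ for all $i\in[n-m]$, using that $\mathbf{c}$ and $\mathbf{u}$ have the same length $n-m$.
\begin{itemize}
\item[($\Leftarrow$)] If $c^\uparrow_i\le u_i$ entrywise, then every index $i$ with $c^\uparrow_i\ge s$ also has $u_i\ge s$. This gives the count inequality directly.
\item[($\Rightarrow$)] Suppose $c^\uparrow_i>u_i$ for some $i$, and take $s=c^\uparrow_i\le n$. Then $c^\uparrow_i,\dots,c^\uparrow_{n-m}$ are all $\ge s$, so $C_s\ge n-m-i+1$. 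Since $\mathbf{u}$ is increasing, only $u_{i+1},\dots,u_{n-m}$ can be $\ge s$ (because $u_i<s$), so $\#\{i':u_{i'}\ge s\}\le n-m-i$. This contradicts the count inequality.
\end{itemize}

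I expect the main obstacle to be the careful justification of the counting reformulation of Statement~\eqref{prop:pf_increasing_condition}. The claim is that $\mathbf{a}^\uparrow_k\le k$ for all $k$ is equivalent to $\#\{j:a_j\ge s\}\le n-s+1$ for all $s$. I will prove it by the same contrapositive argument as the final step: if $a^\uparrow_k>k$, take $s=k+1$ to get at least $n-k+1$ entries $\ge k+1$; conversely, a violating $s$ forces $a^\uparrow_{s-1}\ge s$. Once this reformulation is in hand, the rest is bookkeeping with the complement identity for $\mathbf{u}$.
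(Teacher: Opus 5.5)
Your argument is correct. There is actually no proof in the paper to compare it with. Although the introduction promises one, the lemma is only restated from Adeniran et al.\ (their Definition~2.1) ``for ease of reference.'' It is then used in the alternate proof of Lemma~\ref{lem:any_pc_permutation_works} and at the key step of Proposition~\ref{prop:weakly_increasing_sets_are_equal}. Since $u_i = x_1+\cdots+x_i$, the main theorem is essentially an immediate corollary of this lemma. So your proof supplies the substantive content that the paper delegates to the citation.

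Each of your steps checks out:
\begin{itemize}
\item the tail-count form of Statement~\eqref{prop:pf_increasing_condition};
\item the complement identity $(n-s+1)-T_s=\#\{i:u_i\ge s\}$;
\item the equivalence between tail-count domination and entrywise domination of the sorted vectors, which both have length $n-m$.
\end{itemize}

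Two small points are worth writing out. First, the boundary cases in your reformulation: $s=1$ can never be violated, and $a^\uparrow_n>n$ is impossible because entries lie in $[n]$. This is what keeps your choices $s=k+1$ and index $s-1$ inside $[n]$. Second, ``increasing'' must be read as strictly increasing. Distinctness of the $t_j$ is exactly what your complement identity uses, and it is also what makes $\mathbf{u}$ have length $n-m$.

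A bonus of your route is that it shows the criterion depends on $\mathbf{c}^\uparrow$ alone, which gives Lemma~\ref{lem:any_pc_permutation_works} for free.

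For comparison, a more direct proof, closer to the standard argument behind Statement~\eqref{prop:pf_increasing_condition}, works in the parking process itself. Cars only move rightward, so the $i$ cars filling $u_1,\dots,u_i$ all prefer spots at most $u_i$. Conversely, if some $u_i$ stays empty, no car preferring a spot at most $u_i$ ever passed it. That would force at least $i$ such cars into the $i-1$ spots $u_1,\dots,u_{i-1}$. This avoids the classical characterization, whereas yours reuses it cleanly and keeps all the bookkeeping in one counting identity.
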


Using this characterization, we get a different proof of Lemma~\ref{lem:any_pc_permutation_works}.

\begin{proof}[Alternate proof of Lemma~\ref{lem:any_pc_permutation_works}]
    This follows immediately from the inequality description of parking completions in Lemma~\ref{lem:pc_increasing_condition}.
    Every permutation of $\mathbf{c}$ has the same weakly increasing rearrangement.
    By Lemma~\ref{lem:any_pc_permutation_works}, every permutation of $\mathbf{c}$ is a parking completion (since $\mathbf{c}$ itself is a parking completion).
\end{proof}

Notice that we can recover the inequality description defining parking functions (Statement~\eqref{prop:pf_increasing_condition}) from Lemma~\ref{lem:pc_increasing_condition} by setting $\mathbf{t} = ()$ since this implies $u_i = i$ for all $i$.
% Our goal is to describe the correct translation between parking completions and $\mathbf{x}$-parking functions.
% In \Cref{rem:find_x_from_t}, 
Next, we describe the appropriate conditions for $\mathbf{t}$ and $\mathbf{x}$ needed for our proof of \Cref{rem:find_x_from_t}.

\begin{remark}\label{rem:find_x_from_t}
    In what follows, the \textbf{content} of a list $\mathbf{a} = (a_1, a_2, \hdots, a_n)$ is the set $\{a_1,a_2,\ldots,a_n\}$, and we denote it $\operatorname{con}(\mathbf{a})$.
    % {1^{\ell_{1}},2^{\ell_{2}},\ldots,n^{\ell_{n}}\}$, where, for all $1 \leq i \leq n$, $\ell_i$ represents the number of occurrences of $i$ in $\alpha$. 
    % We call $\ell_i$ the \textit{multiplicity} of $i$ in $\alpha$.
    Let $\mathbf{c} = (c_1, c_2, \hdots, c_{n-m}) \in [n]^{n-m}$ be a parking completion of length $n$ for $\mathbf{t} = (t_1, t_2, \hdots, t_m)$ as an increasing subset of $[n]$.
    Let $\mathbf{u} \coloneqq (u_1, u_2, \hdots, u_{n-m})$ denote the increasing rearrangement of $[n] \setminus \operatorname{con}(\mathbf{t})$.
    That is, $\operatorname{con}(\mathbf{t}) \cup \operatorname{con}(\mathbf{u}) = [n]$, $\operatorname{con}(\mathbf{t}) \cap \operatorname{con}(\mathbf{u}) = \varnothing$, and $u_i < u_j$ for all $i < j$.    
    Define 
    \[
    \mathbf{x} \coloneqq (u_1, u_2 - u_1, u_3 - u_2, \hdots, u_{n-m} - u_{(n-m)-1}) \in \mathbb{N}^{N}
    \]
    where $N \coloneqq \operatorname{len}(\mathbf{u}) = \operatorname{len}(\mathbf{x}) = n-m$.
\end{remark}

In what follows, let $\PC_{n}^\uparrow(\mathbf{t})$ denote the set of weakly increasing parking completions of length $n$ for fixed~$\mathbf{t}$.
Similarly, let $\PF_{N}^\uparrow(\mathbf{x})$ denote the set of weakly increasing $\mathbf{x}$-parking functions of length $N$.
We are now ready to prove the main result (\Cref{thm:MainEnumerativeResult}), and we do it in two parts. First, we show that the weakly increasing sets $\PC_{n}^\uparrow(\mathbf{t})$ and $\PF_{n-m}^\uparrow(\mathbf{x})$ (Proposition~\ref{prop:weakly_increasing_sets_are_equal}). 
Then, by the permutation invariance of both sets (Statement~\eqref{cor:any_pf_permutation_works} and Lemma~\ref{lem:any_pc_permutation_works}), we arrive at the desired set equality between $\PC_{n}(\mathbf{t})$ and $\PF_{n-m}(\mathbf{x})$.

% In \Cref{prop:weakly_increasing_sets_are_equal}, we show that $\mathbf{c}$ is an $\mathbf{x}$-parking function for 
% \[
% \mathbf{x} = (u_1, u_2 - u_1, u_3 - u_2, \hdots, u_{n-m} - u_{(n-m)-1}). 
% \]

\begin{proposition}\label{prop:weakly_increasing_sets_are_equal}
    The set of weakly increasing parking completions of length $n$ for a fixed $\mathbf{t}$ is equal to the set of weakly increasing $\mathbf{x}$-parking functions of length $n-m$ with $\mathbf{x}$ obtained from $\mathbf{t}$ as described in Remark~\ref{rem:find_x_from_t}.
    Namely, $\PC_{n}^\uparrow(\mathbf{t}) = \PF_{n-m}^\uparrow(\mathbf{x})$ with $\mathbf{t}$ and $\mathbf{x}$ as in Remark~\ref{rem:find_x_from_t}.
\end{proposition}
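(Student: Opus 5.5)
The plan is to prove the set equality by double inclusion. Both inclusions reduce to comparing the same family of inequalities, because the map $\mathbf{u}\mapsto\mathbf{x}$ of Remark~\ref{rem:find_x_from_t} telescopes. First I record the identity
\[
x_1 + x_2 + \cdots + x_i = u_1 + (u_2-u_1) + \cdots + (u_i - u_{i-1}) = u_i \quad \text{for all } i \in [n-m],
\]
which is the same observation made after Definition~\ref{def:u_pkg_fn}. I also note that every $x_i$ is a positive integer: $u_1\ge 1$, and $u_{i}-u_{i-1}\ge 1$ because $\mathbf{u}$ is strictly increasing. Hence $\mathbf{x}\in\mathbb{N}^{n-m}$, and Definition~\ref{def:x_parking_function} applies.

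Next, I take $\mathbf{c}$ weakly increasing of length $n-m$, so that $\mathbf{c}^\uparrow=\mathbf{c}$. By Lemma~\ref{lem:pc_increasing_condition}, $\mathbf{c}\in\PC_n^\uparrow(\mathbf{t})$ if and only if $c_i\le u_i$ for all $i\in[n-m]$, with entries in $[n]$. By Definition~\ref{def:x_parking_function}, $\mathbf{c}\in\PF_{n-m}^\uparrow(\mathbf{x})$ if and only if $c_i\le x_1+\cdots+x_i$ for all $i$, with entries in $\mathbb{N}$. By the telescoping identity these inequality systems coincide.

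The one point needing care is the ambient set of entries. Parking completions are required to lie in $[n]^{n-m}$, while $\mathbf{x}$-parking functions are only required to lie in $\mathbb{N}^{n-m}$ with $\mathbb{N}=\{1,2,\dots\}$. I handle this by observing that the inequality $c_i\le u_i\le n$ already forces every entry of an $\mathbf{x}$-parking function into $[n]$, since $u_{n-m}\le n$. Conversely, entries of a parking completion are positive integers. So the entry domains agree once the inequalities hold.

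I expect no real obstacle. The only subtlety is making sure Lemma~\ref{lem:pc_increasing_condition} is applied with $\mathbf{c}^\uparrow=\mathbf{c}$, and checking the domain issue above, which is the step I would write out most carefully. Combining the two inclusions gives $\PC_n^\uparrow(\mathbf{t})=\PF_{n-m}^\uparrow(\mathbf{x})$.
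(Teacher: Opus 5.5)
Your proposal is correct and follows the same route as the paper: a double containment in which, via the inequality characterization of Lemma~\ref{lem:pc_increasing_condition} and the telescoping identity $x_1+\cdots+x_i=u_i$, both sets are cut out by identical inequalities. Your version is cleaner than the paper's. It drops the induction in the first containment, which does no work because the inductive step already calls the lemma directly. It also explicitly checks that $\mathbf{x}\in\mathbb{N}^{n-m}$ and that the entry domains $[n]$ and $\mathbb{N}$ agree, which the paper leaves implicit.
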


\begin{proof}
    We proceed via a double containment argument.

    Let $\mathbf{c} \in \PC_n^\uparrow(\mathbf{t})$ for $\mathbf{t} = (t_1, t_2, \hdots, t_m)$.
    We show that $\mathbf{c}^\uparrow \in \PF_{n-m}^\uparrow(\mathbf{x})$.
    Equivalently, we show that $\mathbf{c}^\uparrow$ satisfies $c_i^\uparrow \leq x_1 + x_2 + \cdots + x_i$ for all $i \in [n-m]$.
    Notice $\mathbf{c} = \mathbf{c}^\uparrow$ because we assumed that $\mathbf{c}$ was weakly increasing.
    Now, observe $c_1 \leq x_1 = u_1$ because $u_1$ is the first unoccupied spot on the street in the parking completion setting.
    If $c_1 > u_1 = x_1$, then spot $u_1$ is left empty, and $\mathbf{c} \notin \PC_{n}^\uparrow(\mathbf{t})$ which is a contradiction.
    This is the base case of our inductive argument.
    
    For the induction hypothesis, we assume that if $\mathbf{c} = (c_1, c_2, \hdots, c_k) \in \PC_{m+k}^\uparrow(\mathbf{t})$, then $\mathbf{c} \in \PF_{k}^\uparrow(\mathbf{x})$.
    For the induction step, consider $\mathbf{d} = (d_1, d_2, \hdots, d_k, d_{k+1}) \in \PC_{m+(k+1)}^\uparrow(\mathbf{t})$.
    We show $\mathbf{d} \in \PF_{k+1}^\uparrow(\mathbf{x})$.
    Using the induction hypothesis, $(d_1, d_2, \hdots, d_k) \in \PF_{k}^\uparrow(\mathbf{x})$, and thus, $d_i \leq x_1 + x_2 + \cdots + x_i$ for $i \in [k]$ by definition of $\mathbf{x}$-parking function.
    It remains to check that $d_{k+1} \leq x_1 + x_2 + \cdots + x_{k+1}$.
    Substituting $x_1 = u_1$ and $x_i = u_{i} - u_{i-1}$ for $i \in \{2, 3, \ldots, k+1\}$, we have a telescoping sum that simplifies to show $x_1 + x_2 + \cdots + x_{k+1} = u_{k+1}$.
    We immediately have that $d_{k+1} \leq u_{k+1} = x_1 + x_2 + \cdots + x_{k+1}$ by the inequality characterization of parking completions.
    This shows $\PC_{n}^\uparrow(\mathbf{x}) \subseteq \PF_{n-m}^\uparrow(\mathbf{x})$.

    For the other containment, let $\mathbf{a} = (a_1, a_2, \hdots, a_{n-m}) \in \PF_{n-m}^\uparrow(\mathbf{x})$.
    We show $\mathbf{a} \in \PC_{m+(n-m)}^\uparrow(\mathbf{t})$.
    Equivalently, we show that $\mathbf{a}^\uparrow$ satisfies $a_i^{\uparrow} \leq u_i$ for all $i \in [n]$.
    Notice $\mathbf{a} = \mathbf{a}^\uparrow$ because we assumed that $\mathbf{a}$ was weakly increasing.
    Now, observe $a_1 \leq x_1 = u_1$ by Definition~\ref{def:x_parking_function} and Remark~\ref{rem:find_x_from_t}.
    By Remark~\ref{rem:find_x_from_t}, for all $i \in \{2, 3, \hdots, n-m\}$, we have that $x_i = u_i - u_{i-1}$.
    This implies $a_i \leq x_1 + x_2 + \hdots + x_i = u_i$ for all $i \in \{ 2, 3, \hdots, n-m \}$, as desired.
    We can obtain $\mathbf{u} = (u_1, u_2, \hdots, u_{n-m})$ using the relationships mentioned in the previous two sentences, and we can obtain $\mathbf{t} = (t_1, t_2, \hdots, t_m)$ as the increasing rearrangement of $[n] \setminus \operatorname{con}(\mathbf{u})$, and in this way, $\mathbf{a}$ is a parking completion of length $n$ for $\mathbf{t}$.
    This shows $\PF_{n-m}^\uparrow(\mathbf{x}) \subseteq \PC_{n}^\uparrow(\mathbf{x})$.

    We have shown double containment, and this concludes the proof.
\end{proof}

\begin{theorem}\label{thm:MainEnumerativeResult}
    Let $\mathbf{t} = (t_1, t_2, \hdots, t_m)$.
    % u = (u1, u2, dots, un-m) where content (t) \cap content(u) empty
    Then, $\mathbf{x} = (u_1, u_2 - u_1, u_3 - u_2, \hdots, u_{n-m} - u_{(n-m)-1})$ using Remark~\ref{rem:find_x_from_t}.
    In this way, the set of parking completions of length $n$ for $\mathbf{t}$ is exactly the set of $\mathbf{x}$-parking functions of length $n-m$.
    Namely, $\PC_{n}(\mathbf{t}) = \PF_{n-m}(\mathbf{x})$ with $\mathbf{t}$ and $\mathbf{x}$ as in Remark~\ref{rem:find_x_from_t}.
\end{theorem}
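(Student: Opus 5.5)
The plan is to reduce the theorem to Proposition~\ref{prop:weakly_increasing_sets_are_equal} by sorting. Both sides of $\PC_{n}(\mathbf{t}) = \PF_{n-m}(\mathbf{x})$ are sets of sequences of length $n-m$. Membership in each set is decided entirely by the weakly increasing rearrangement. For $\PF_{n-m}(\mathbf{x})$ this is Definition~\ref{def:x_parking_function}, which is phrased through $\mathbf{a}^\uparrow$. For $\PC_n(\mathbf{t})$ it follows from Lemma~\ref{lem:any_pc_permutation_works}, or directly from Lemma~\ref{lem:pc_increasing_condition}. So I would prove the set equality by double containment, passing in each direction through the sorted sequence.

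For the first containment, take $\mathbf{c} \in \PC_n(\mathbf{t})$. By Lemma~\ref{lem:any_pc_permutation_works}, its rearrangement $\mathbf{c}^\uparrow$ is also a parking completion for $\mathbf{t}$, so $\mathbf{c}^\uparrow \in \PC_n^\uparrow(\mathbf{t})$. Proposition~\ref{prop:weakly_increasing_sets_are_equal} then gives $\mathbf{c}^\uparrow \in \PF_{n-m}^\uparrow(\mathbf{x})$. In other words, $c_i^\uparrow \le x_1+\cdots+x_i$ for all $i\in[n-m]$. Since $\mathbf{c}$ has this same increasing rearrangement, $\mathbf{c} \in \PF_{n-m}(\mathbf{x})$ by definition.

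For the reverse containment, take $\mathbf{a} \in \PF_{n-m}(\mathbf{x})$. Then $\mathbf{a}^\uparrow$ satisfies the same inequalities, so $\mathbf{a}^\uparrow \in \PF_{n-m}^\uparrow(\mathbf{x}) = \PC_n^\uparrow(\mathbf{t})$. Since $\mathbf{a}$ is a permutation of $\mathbf{a}^\uparrow$, Lemma~\ref{lem:any_pc_permutation_works} gives $\mathbf{a}\in\PC_n(\mathbf{t})$.

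The only point needing care is that the entries land in the right ambient sets. Parking completions are required to lie in $[n]^{n-m}$, while $\mathbf{x}$-parking functions only require positive integers. This is harmless, because $x_1+\cdots+x_i = u_i \le n$ by the telescoping identity, so every $\mathbf{x}$-parking function automatically has entries in $[n]$.

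I expect the main obstacle to be the reliance on Proposition~\ref{prop:weakly_increasing_sets_are_equal}. Its real content is just the telescoping identity $x_1+\cdots+x_i=u_i$ combined with the inequality characterization in Lemma~\ref{lem:pc_increasing_condition}. If I wanted a self-contained argument, I would bypass the proposition and note that
\[
\mathbf{c}\in\PC_n(\mathbf{t}) \iff c^\uparrow_i\le u_i \ \forall i \iff c^\uparrow_i\le x_1+\cdots+x_i \ \forall i \iff \mathbf{c}\in\PF_{n-m}(\mathbf{x}),
\]
which proves the theorem in one line. In particular the bijection is the identity map.
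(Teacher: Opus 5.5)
Your proposal is correct and takes essentially the paper's route. The paper also deduces the theorem from Proposition~\ref{prop:weakly_increasing_sets_are_equal} via permutation invariance of $\PC_n(\mathbf{t})$ (Lemma~\ref{lem:any_pc_permutation_works}) and of $\PF_{n-m}(\mathbf{x})$ (Definition~\ref{def:x_parking_function}); your explicit double containment, the check that $u_{n-m}\le n$ keeps entries in $[n]$, and the one-line chain through Lemma~\ref{lem:pc_increasing_condition} spell out details the paper leaves implicit.
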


\begin{proof}
    By \Cref{prop:weakly_increasing_sets_are_equal}, we know the weakly increasing versions of the sets in question agree.
    Since parking completions are permutation invariant by \Cref{lem:any_pc_permutation_works} and since $\mathbf{x}$-parking functions are permutation invariant by Definition~\ref{def:x_parking_function}, we can permute the weakly increasing versions of these sets respectively to obtain the final result.
\end{proof}

% \begin{corollary}
%     With $\mathbf{t}$ and $\mathbf{x}$ as in \Cref{thm:MainEnumerativeResult},
%     % Let $\mathbf{t} = (t_1, t_2, \hdots, t_m)$.
%     % % u = (u1, u2, dots, un-m) where content (t) \cap content(u) empty
%     % Then, $\mathbf{x} = (u_1, u_2 - u_1, u_3 - u_2, \hdots, u_{n-m} - u_{(n-m)-1})$ using \Cref{rem:find_x_from_t}.
%     there exists a bijection between the set of parking completions of length $n$ for $\mathbf{t}$ and the set of $\mathbf{x}$-parking functions of length $n-m$.
% \end{corollary}

% \begin{proof}
%     The proof follows immediately from \Cref{thm:MainEnumerativeResult}, with the bijection being the identity map.
% \end{proof}

\begin{corollary}
    With $\mathbf{t}$ and $\mathbf{x}$ as in \Cref{thm:MainEnumerativeResult}, we have
    $
    |\PC_n(\mathbf{t})| = P_n(\mathbf{x})
    $.
\end{corollary}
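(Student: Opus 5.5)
The plan is to combine \Cref{thm:MainEnumerativeResult} with a counting formula for $\mathbf{x}$-parking functions. Throughout, I read $P_n(\mathbf{x})$ as the Pitman--Stanley polynomial in the $N = n-m$ variables of $\mathbf{x}$, that is, $P_N(\mathbf{x}) = \sum_{\mathbf{b} \in \PF_N} x_{b_1}\cdots x_{b_N}$ as in \Cref{eq:PS_pf+vol_xpkgfn}. This is the only reading under which the lengths match.

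First, \Cref{thm:MainEnumerativeResult} gives the set equality $\PC_n(\mathbf{t}) = \PF_{N}(\mathbf{x})$, so $|\PC_n(\mathbf{t})| = |\PF_N(\mathbf{x})|$. It then remains to show $|\PF_N(\mathbf{x})| = P_N(\mathbf{x})$. This is a known result of Pitman and Stanley. Since the paper quotes only \Cref{thm:ps_vol+prkgfns}, I would include a short direct argument rather than rely on that citation.

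For the count, put $u_0 = 0$ and $u_i = x_1 + \cdots + x_i$. Partition $[u_N]$ into consecutive blocks $B_i = \{u_{i-1}+1, \ldots, u_i\}$, so that $|B_i| = x_i$. Every entry of an $\mathbf{x}$-parking function lies in $[u_N]$, because $a_j \le a_N^\uparrow \le u_N$. Define $\phi(\mathbf{a}) = \mathbf{b}$, where $b_j$ is the unique $i$ with $a_j \in B_i$.

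The key observation is that the block-index map $a \mapsto i$ is weakly increasing. Hence $\phi$ commutes with taking the weakly increasing rearrangement: $b_k^\uparrow$ is the block index of $a_k^\uparrow$. Moreover, $a_k^\uparrow \le u_k$ holds if and only if that block index is at most $k$. Therefore $\mathbf{a} \in \PF_N(\mathbf{x})$ if and only if $b_k^\uparrow \le k$ for all $k$, which by Statement~\eqref{prop:pf_increasing_condition} means $\mathbf{b} \in \PF_N$.

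Next I would compute the fibers. For fixed $\mathbf{b} \in \PF_N$, the preimages $\phi^{-1}(\mathbf{b})$ are obtained by independently choosing $a_j \in B_{b_j}$ for each $j$. This gives exactly $x_{b_1}\cdots x_{b_N}$ preimages. Summing over $\mathbf{b} \in \PF_N$ yields $|\PF_N(\mathbf{x})| = P_N(\mathbf{x})$, and combining with the first step proves the corollary.

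The main obstacle is bookkeeping rather than any genuine difficulty. One point is to fix the intended indexing of $P_n$ versus $P_{n-m}$. The other is to verify carefully that sorting commutes with the block map, which holds because the map is monotone, so ties are harmless. Optionally, one can append \Cref{eq:PS_pf+vol_polytope} to restate the count as $N!\,V_N(\mathbf{x}) = N!\,\Evol(\PS_N(\mathbf{x}))$, which ties back to the open problem of Adeniran et al.
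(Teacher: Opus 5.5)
Your proof is correct, and its skeleton matches the paper's: first the set equality $\PC_n(\mathbf{t}) = \PF_{n-m}(\mathbf{x})$ from \Cref{thm:MainEnumerativeResult}, then the count of $\mathbf{x}$-parking functions. The difference is in the second step. The paper's one-line proof treats it as immediate, resting on the Pitman--Stanley identity $|\PF_N(\mathbf{x})| = \sum_{\mathbf{b}\in\PF_N} x_{b_1}\cdots x_{b_N}$ (\Cref{thm:ps_vol+prkgfns}), which it cites but does not prove. Your block map $\phi$ proves that identity directly, so your argument is self-contained.

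In this setting the block map also has a natural parking interpretation. Each $B_i$ is the run of taken spots just before the free spot $u_i$, together with $u_i$. So $b_j$ is the index of the first unoccupied spot at or after $a_j$. This partitions $\PC_n(\mathbf{t})$ into fibers over $\PF_{n-m}$, of sizes $x_{b_1}\cdots x_{b_{n-m}}$.

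Your reindexing to $P_{n-m}(\mathbf{x})$ is also the right reading of the statement, since $\mathbf{x}$ has length $n-m$.
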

\begin{proof}
    This follows immediately since the sets of objects are identical.
\end{proof}

\section{Future Work}\label{sec:future_work_short_paper}
There are three immediate ways this work can be extended.
\begin{enumerate}
    \item 
    In \cite[Section 5]{bib:ParkingCompletions}, Adeniran et al.\ illustrate that \Cref{eqn:completions_count} and \Cref{eq:PS_pf+vol_polytope} often return different numbers of terms.
    Computationally, it appears \Cref{eqn:completions_count} has more terms about half of the time, and \Cref{eq:PS_pf+vol_polytope} has more terms a little less than half of the time. 
    However, it remains an open problem to determine conditions on $\mathbf{t}$ for which one formula is computationally less expensive than the other. 
    
    \item Given the previous question, it is of interest to give a purely algebraic manipulation that converts \Cref{eqn:completions_count} into \Cref{eq:PS_pf+vol_polytope}. 
    If not possible in full generality, we ask if this can be done in some specific cases.

    \item In \cite{bib:PitmanStanley}, the authors enumerate $\mathbf{x}$-parking functions as the volume of a polytope.
    Knowing now that these objects are parking completions, we want to know whether there is a polytope whose volume enumerates parking completions. 
    This would give a geometric interpretation for \Cref{eqn:completions_count}. 
    % (This problem is inspired by the fact that there does exist a geometric interpretation for \Cref{eq:PS_pf+vol_polytope}.)
\end{enumerate}

\newpage
%    Bibliographies can be prepared with BibTeX using amsplain,
%    amsalpha, or (for "historical" overviews) natbib style.
\bibliographystyle{amsplain}
  \bibliography{2bibliography}

% \bibliographystyle{plain}
%    Insert the bibliography data here.

% \newpage
\appendix
\section{Code}\label{sec:appendix_code}
\subsection{Balanced sequences of length $n$}\label{sec:appendix_code_for_K_n_short_paper}
The code defines a function which, given $n$, produces the elements of $K_n$ as well as enumerates them.
\begin{lstlisting}[language=Python]
    def K(n):
    IVn = IntegerVectors(n,n).list()
    K = []
    for v in IVn:
        for i in range(n):
            if sum(v[:i]) >= i:
                if i == n-1:
                    K.append(v)
            else:
                break
    print(K)
    print(len(K))
\end{lstlisting}

\subsection{Comparing parking completions and $\mathbf{x}$-parking functions}\label{sec:appendix_code_check_all_xpf_short_paper}
This code defines a function which, given $n$, ranges over all possible $\mathbf{t}$ and computes all corresponding parking completions.
The function simultaneously computes all $\mathbf{x}$-parking functions corresponding to each choice of $\mathbf{t}$, and returns whether these sets have all of the same elements.
(Note we do call functions that are not described in this particular excerpt of code.)

\begin{lstlisting}[language=Python]
import time

start = time.time()
n=4
win = true
for t in find_all_t(n):
    x = buildX(n, t)
    if not compare(n,t,x):
        print(f"NO FREAKING WAY t={t}\nx={x}")
        win = false
if win:
    print("YOU WIN LEVEL", n,"!")
end = time.time()

print("Runtime:", end - start, "seconds")
\end{lstlisting}

\end{document}